\newcommand{\Z}{\mathbb{Z}}
\numberwithin{equation}{section}
\theoremstyle{plain}
\newtheorem{theorem}[equation]{Theorem}
\newtheorem{corollary}[equation]{Corollary}
\newtheorem{proposition}[equation]{Proposition}
\newtheorem{lemma}[equation]{Lemma}
\theoremstyle{definition}
\newtheorem{example}[equation]{Example}
\newtheorem{defn}[equation]{Definition}
\theoremstyle{remark}
\newtheorem{remark}[equation]{Remark}
\newtheorem{rem}[equation]{Remark}
\newtheorem*{remm}{Remark}
\def\map#1{{\buildrel #1 \over \longrightarrow}}
\newcommand{\chr}{\operatorname{chr}}
\newcommand{\cone}{\operatorname{cone}}
\newcommand{\cyl}{\operatorname{cyl}}
\newcommand{\RP}{\mathbb{RP}}
\begin{document}
\title[On the covering type of a space]
{On the covering type of a space}
\date{\today}

\author{Max Karoubi}
\address{Institut Math\'ematiques de Jussieu--Paris rive gauche,
75205 Paris Cedex 13, France}
\email{max.karoubi@gmail.com}
\author{Charles Weibel}
\address{Math.\ Dept., Rutgers University, New Brunswick, NJ 08901, USA}
\email{weibel@math.rutgers.edu}
\thanks{Weibel was supported by NSA and NSF grants, 
and by the IAS Fund for Math} 

\begin{abstract}
We introduce the notion of the ``covering type'' of a space, 
which is more subtle that the notion of Lusternik Schnirelman category. 
It measures the complexity of a space which arises from coverings by
contractible subspaces whose non-empty intersections are also contractible.
\end{abstract}
\maketitle

\pagestyle{myheadings} \setcounter{section}{1}%
From the point of view of Algebraic Topology, the simplest spaces are 
the contractible ones. A crude measure of the complexity of a space $X$
is the size of a finite open covering of $X$ by contractible subspaces;
this idea goes back to the work of Lusternick and Schnirelman in 1934.
A more subtle invariant is the size of {\it good covers}, i.e.,
covers by contractible subspaces such that each of their non-empty 
intersections is also contractible.
The idea of a good cover appears in a 1952 paper by Andr\'e Weil \cite{Weil},
but is preceded by Leray's notion of a {\it convexoid cover} 
\cite[p.\,139]{Leray} which uses closed covers with acyclic intersections.

We define the {\it covering type} of $X$ to be the minimum size of 
a good cover of a space homotopy equivalent to $X$.
See Definition \ref{def:ct} below for a precise description
using open covers; for finite CW complexes, an equivalent version 
using closed covers by subcomplexes
is given in Theorem \ref{open=closed}.

We will see that this is an interesting measure of the complexity
of a space. For connected graphs, 
the covering type is approximately $\sqrt{2h}$,
where $h$ is the number of circuits. Thus it is very different from
the Lusternick--Schnirelman category (which is 2 for non-tree graphs),
and Farber's topological complexity \cite{Farber} (which is at most 3
for graphs).

The covering type of a surface is related to its chromatic number.
The {\it chromatic number} of a surface is the smallest number $n$ such that
every map on the surface is $n$-colorable (see Definition \ref{def:chromatic}); 
it was first described in 1890 by Haewood \cite{Hae}.
Finding the chromatic number of a surface was long known as the 
{\it map coloring problem}. The chromatic number of the 2-sphere is 4;
this is the ``four color theorem,'' settled in 1976.  The other cases
of the map coloring problem
were settled in 1968; see \cite{Ringel}. 
For the 2-sphere, torus and projective plane, the covering type
equals the chromatic number: 4, 7 and 6, respectively.

Special solutions to the map coloring problem give an upper bound
for the covering type of a surface: it is at most one more than 
the chromatic number. 
Combinatorial and topological considerations of any space, 
such as its Betti numbers, yield general lower bounds for its covering type.
This approach shows that the covering type of 
a surface is alway more than half the chromatic number.
Although our upper and lower bounds for the covering type of a surface 
differ as functions of its genus $g$, both are linear in $\sqrt{g}$
(see Section \ref{sec:surfaces}).

\subsection*{\bf Open problem:}
Except for the sphere, torus and projective plane,
determining the covering type of a surface is an open problem.  
For example, we do not know if the
covering type of the Klein bottle is 7 or 8. For the 2-holed torus
(oriented surface of genus~2), 
we only know that the covering type is between 6 and 10.

\goodbreak
\subsection*{\bf Motivation:}
Leray's motivation for introducing his convexoid covers was to
    easily compute homology; see \cite[pp.153--159]{Leray}. 

Weil's motivation for introducing special open coverings
was to prove de Rham's theorem for a manifold \cite{Weil}.
Here is a formalization of his idea in the language of cohomology theories.
Let $h^*$ and $k^*$ be two cohomology theories and let 
$T: h^* \to k^*$ be a natural transformation such that the kernel 
and cokernel of the homomorphism $T_X:h^*(X) \to k^*(X)$ are of order 
at most $m$ when $X$ is a point. 
Using Mayer-Vietoris sequences and induction on $n$, 
we see that for a space $X$ of 
covering type $n$, the kernel and cokernel of $T_X$ have
order at most $m^{2^n}$. This general principle was
applied by Weil in the case where $h^*$ is singular cohomology,
$k^*$ is de Rham cohomology and $m = 0$ (Poincar\'e's lemma). 
We used the same idea with $m=2^t$ in a preliminary version 
of our paper \cite{KW}, 
comparing the algebraic Witt group of a real algebraic variety 
with a purely topological invariant; this was in fact our
initial motivation for investigating the notion of covering type.

\smallskip
Here are our definitions of a good cover and the covering type:

\begin{defn}\label{def:goodcover}
Let $X$ be a topological space. 
A family of contractible open subspaces 
$U_i$ forms a {\it good (open) cover} if every nonempty intersection 
$U_{i_1}\cap...\cap U_{i_n}$ is contractible.
\end{defn}
\begin{defn}\label{def:ct}
The \textit{strict covering type} of $X$ is the
minimum number of elements in a good cover of $X$.
The \textit{covering type} of $X$, $ct(X)$, is the
minimum of the strict covering types of spaces $X^{\prime }$ homotopy
equivalent to $X$. 
\end{defn}

Thus $ct(X)=1$ exactly when $X$ is contractible, and $ct(X)=2$ 
exactly when $X$ is the disjoint union of two contractible spaces.
It is easy to see that a circle has covering type 3, and only slightly 
harder to see that the 2-sphere and figure~8 have covering type 4 
(small neighborhoods of the faces of a tetrahedron give a good cover
of the 2-sphere).

The difference between the strict covering type and the covering
type is illustrated by bouquets of circles.

\begin{example}\label{ct(figure8)}
The strict covering type of a bouquet of $h$ circles is 
$h+2$, 
since 3 subcomplexes are needed for each circle. 
For $h=2$ and $3$, $S^{1}\vee S^{1}$ and 
$S^{1}\vee S^{1}\vee S^{1}$ both have covering type at most 4, 
since Figure \ref{fig:2-3 circles} indicates good covers of 
homotopy equivalent spaces.
(The $U_i$ are small neighborhoods of the three outer edges,
$X_{1},X_{2},X_{3}$ and the inside curve $X_4$).
We will see in Proposition \ref{ct:bouquets} that the covering type 
is exactly~4 in both cases. 
\end{example}
\begin{center}
\begin{figure}
\setlength{\unitlength}{1cm} 
\begin{picture}(1,1.2)
\put(0,0){\line(0,1){1}} \put(-.7,.3){$X_2$}
\put(0,0){\line(1,0){1}} \put(.3,-.4){$X_1$}
\put(1,0){\line(-1,1){1.0}} \put(.5,.6){$X_3$}
\put(0,0){\line(1,1){0.5}}
\end{picture}
\qquad \hspace{2pc}
\begin{picture}(1,1)
\put(0,0){\line(0,1){1}} \put(-.7,.3){$X_2$}
\put(0,0){\line(1,0){1}} \put(.3,-.4){$X_1$}
\put(1,0){\line(-1,1){1.0}} \put(.5,.6){$X_3$}
\put(0,0.5){\line(1,0){.5}}
\put(0.5,0){\line(0,1){.5}}
\end{picture}\par
\caption{$ct(X)=4$ for bouquets of 2 or 3 circles}
\label{fig:2-3 circles}
\end{figure}
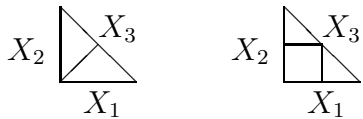
\end{center}
%


If $X$ is a CW complex, we may replace 'open subspace' 
by 'closed subcomplex' in Definition \ref{def:goodcover} to obtain
the analogous notion of a {\it good closed cover} of $X$,
and the concomitant notion of a closed covering type.  
We will show in Theorem \ref{open=closed} that the closed
and open covering types of a finite CW complex agree.
This simplifies 
our illustrations.

\smallskip
We have structured this article as follows.
In Section \ref{sec:open/closed}, we show that we may also define
the covering type of a finite CW complex using covers by contractible
subcomplexes.
In Section \ref{sec:small}, we establish useful lower bounds on
covering type using homology, and use these bounds in 
Section \ref{sec:graphs} to determine the covering type of any graph 
in Proposition \ref{ct:bouquets}.

In Sections \ref{sec:surfaces} and \ref{sec:cohomology} 
we determine the covering type of some classical surfaces, and
use graphs on an arbitrary surface to give upper bounds for its covering type.
For oriented surfaces of genus $g>2$, 
the covering type lies between $2\sqrt{g}$ and $3.5\sqrt{g}$;
see Proposition \ref{ct-surface}. Similar bounds hold for
non-oriented surfaces; see Proposition \ref{ct:non-oriented}.

We conclude in Section \ref{sec:higher}
with some upper bounds for the covering type in higher dimensions,
showing for instance that $ct(\RP^m)$ is between $m+2$ and $2m+3$.

\goodbreak\smallskip
\section{Open vs. closed covers}\label{sec:open/closed}

Any cover of a manifold by geodesically convex open subsets
is a good cover; by convexity, every nonempty intersection of them
is contractible. Thus compact manifolds have finite strict covering type.
(An elementary proof is given in \cite[VI.3]{Karoubi.Leruste}.)

Finite simplicial complexes also have finite strict covering type,
because the open stars of the vertices form a good cover.
Finite polyhedra also have finite strict covering type, as they
can be triangulated.
Any finite CW complex is homotopy equivalent to a finite
polyhedron by \cite[Thm.\,13]{Whitehead}, so
its covering type is also finite.

\begin{proposition}\label{CWcomplex}
If a paracompact space has finite covering type $n$, 
it is homotopy equivalent to a finite CW complex whose strict covering
type is $n$.
\end{proposition}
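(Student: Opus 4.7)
The plan is to realize $X$ up to homotopy as the nerve of a good cover that achieves the covering type, and then show that the nerve carries a canonical good cover of the correct size.

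By definition of $ct(X)=n$, there exists a space $X'\simeq X$ equipped with a good open cover $\mathcal{U}=\{U_1,\ldots,U_n\}$. Form the nerve $N=N(\mathcal{U})$: the finite abstract simplicial complex on the vertex set $\{1,\ldots,n\}$ in which $\{i_1,\ldots,i_k\}$ spans a simplex iff $U_{i_1}\cap\cdots\cap U_{i_k}\neq\emptyset$. The geometric realization of $N$ is a finite CW complex on $n$ vertices.

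The first half of the conclusion, $X\simeq N$, follows from the Nerve Theorem of Borsuk--Weil--Leray: for a numerable good open cover, the canonical nerve map $X'\to N$ is a homotopy equivalence. This is exactly where the paracompactness of $X$ is used, either directly (after pulling the cover back through a homotopy equivalence $X\simeq X'$) or via passage to a paracompact model of $X'$, to secure a partition of unity subordinate to $\mathcal{U}$ and thereby make the cover numerable.

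For the second half, the collection of open stars $\St(v_1),\ldots,\St(v_n)$ of the vertices of $N$ is a good open cover with exactly $n$ elements. Indeed, each $\St(v_i)$ deformation retracts onto $v_i$ and is contractible, and any nonempty intersection $\St(v_{i_1})\cap\cdots\cap\St(v_{i_k})$ coincides with the open star of the simplex spanned by those vertices, so is again contractible; intersections over vertex sets that span no simplex are empty. Hence the strict covering type of $N$ is at most $n$. Since $N\simeq X$ has covering type $n$, and the strict covering type of any space bounds its covering type from above, equality holds. The main obstacle is the careful invocation of the Nerve Theorem: one must ensure that the numerability hypothesis is met so that the nerve map is a genuine homotopy equivalence, not merely a weak equivalence. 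Once this is secured, the rest of the argument is a short verification about stars in a simplicial complex.
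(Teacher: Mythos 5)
Your proposal is correct and follows essentially the same route as the paper: form the nerve of a minimal good cover, use the Nerve Lemma (via a partition of unity, i.e.\ the Alexandroff map) to get a homotopy equivalence with the nerve $N$, note that the open stars of the $n$ vertices form a good open cover of $N$, and conclude that the strict covering type of $N$ is exactly $n$ by minimality. One small caveat: your parenthetical fallback of ``pulling the cover back through a homotopy equivalence $X\simeq X'$'' does not work as stated, since preimages of contractible sets under a homotopy equivalence need not be contractible; but your main line of argument --- applying the Nerve Lemma directly to the paracompact space carrying the good cover --- is exactly the paper's proof.
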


\begin{proof}
Suppose that $X$ is a paracompact space with a finite
open cover $\mathcal{U}=\{U_i\}$ and let $N$ 
be its geometric nerve; $N$ is a simplicial complex whose vertices 
are the indices $i$, and a set $J$ of vertices spans a simplex of $N$ if\; 
$\bigcap_{i\in J} U_i\ne\varnothing$. 
The {\it Alexandroff map} $X\to N$ is a continuous function,
given by the following standard construction; see \cite[IX.3.4]{AH} or
\cite[VIII.5.4]{Dugundji}. 
Choose a partition of unity $\{v_i\}$ associated to the open cover.
Any point $x\in X$ determines a set $J_x$ of indices 
(the $i$ such that $x\in U_i$), and 
the Alexandroff map sends a point $x$ to
the point with barycentric coordinates $v_i(x)$ in the simplex 
spanned by $J_x$.

If $\{U_i\}$ is a good cover, 
the Alexandroff map $X\to N$ is a homotopy equivalence, by the
``Nerve Lemma'' (see \cite[4G.3]{Hatcher}).
If~ $\mathcal{U}$ has $n$ elements then $N$ has $n$ vertices, and the
open stars of these vertices form a good cover of $N$.  If $ct(X)=n$
then $N$ has no smaller good cover, and hence $N$ has
strict covering type $n$.
\end{proof}

\begin{example}\label{rem:ct=3}
If $ct(X)=3$, $X$ is either the disjoint union of 3 contractible sets
or is homotopy equivalent to the circle, by the Alexandroff map.
To see this, suppose that $\{U_{1},U_{2},U_{3}\}$ is a good cover of 
a connected $X$.
By a case by case inspection, we see that $H^1(X)=0$ and $ct(X)<3$ 
unless $U_{i}\cap U_{j}\ne\varnothing$ for all $i,j$ and 
$U_{1}\cap U_{2}\cap U_{3}=\varnothing.$ In this case,
the Alexandroff map $X\to S^1$ is a homotopy equivalence
and $\dim H^1(X)=1$.

Similarly, a case by case inspection shows that (up to homotopy)
the only connected spaces with covering type~4 are the 2-sphere 
and the bouquets of
2 or 3 circles illustrated in Figure \ref{fig:2-3 circles}.
\end{example}

\begin{remark}
The ``Hawaiian earring'' \cite[1.25]{Hatcher} is the union of the
circles $(x-1/n)^2+y^2=1/n^2$ in the plane; see Figure \ref{fig:earring}. 
It is a compact space which has no good open cover; its 
strict covering type is undefined. It follows from Proposition 
\ref{CWcomplex} and compactness that its covering type is also undefined.
\end{remark}

\begin{center}
\begin{figure}
\setlength{\unitlength}{1cm} 
\begin{picture}(1,1.2)
\put(.95,0.5){\circle{2.0}} 
\put(.75,0.5){\circle{1.0}}
\put(.62,0.5){\circle{0.7}}
\put(.55,0.5){\circle{0.55}}
\put(.45,0.5){\circle{0.4}}
\put(.40,0.5){\circle{0.25}}
\put(.35,0.5){\circle{0.15}}
\end{picture}\par
\caption{Hawaiian earring}
\label{fig:earring}
\end{figure}
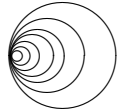
\end{center}

\begin{defn}
If $X$ is a finite CW complex, a {\it good closed cover}
is a family of contractible subcomplexes $\{X_i\}$ such that
every intersection of the $X_i$ is either empty or contractible.
\end{defn}

For example, the maximal simplices of any simplicial complex
form a good closed cover. We will see more examples in Sections
\ref{sec:graphs} and \ref{sec:surfaces}.

For our next result, we need the classifying space $BP$ of a 
finite poset $P$. It is a simplicial complex whose vertices are the
elements of $P$, and whose simplices are the totally ordered
subsets of $P$.

\begin{theorem}\label{open=closed}
If $X$ is a finite CW complex, the covering type of $X$ is the
minimum number of elements in a good closed cover of 
some complex homotopy equivalent to $X$.
\end{theorem}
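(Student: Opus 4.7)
Let $c(X)$ denote the minimum number of elements in a good closed cover of some finite CW complex homotopy equivalent to $X$; the plan is to prove $ct(X)=c(X)$ by establishing both inequalities, using the classifying space $BP$ for one direction and open regular neighborhoods for the other.

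For $c(X)\le ct(X)$, suppose $ct(X)=n$. By Proposition~\ref{CWcomplex} we may replace $X$ by a simplicial complex $N$ on vertices $v_1,\dots,v_n$ whose open stars realize the good cover. Let $P$ be the face poset of $N$, so that $|BP|$ is the barycentric subdivision $|N'|$. For each $i$ set $F_i=\{\sigma\in P:v_i\in\sigma\}$ and $Z_i:=|B(F_i)|$, viewed as a subcomplex of $|BP|$. Since $F_i$ has least element $\{v_i\}$, $Z_i$ is a simplicial cone, hence contractible. The intersection $F_{i_1}\cap\cdots\cap F_{i_k}$ is empty when $\{v_{i_1},\dots,v_{i_k}\}$ is not a simplex of $N$, and otherwise has least element $\{v_{i_1},\dots,v_{i_k}\}$, so its order complex is again a cone. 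Every simplex of $BP$ is a chain $\sigma_0<\cdots<\sigma_k$ with $\sigma_0$ nonempty, so it lies in $Z_i$ for any $v_i\in\sigma_0$; thus the $Z_i$ cover $|BP|$. This exhibits $\{Z_i\}_{i=1}^n$ as a good closed cover of $|N'|\simeq X$ with $n$ elements.

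For $ct(X)\le c(X)$, let $\{X_i\}_{i=1}^n$ be a good closed cover of a finite CW complex $Y\simeq X$. After triangulation we may assume $Y$ is a simplicial complex and each $X_i$ a subcomplex. Pass to the second barycentric subdivision $Y''$ and let $U_i\subset Y''$ be the open regular neighborhood of $X_i$, namely the union of the open stars in $Y''$ of the vertices of $X_i$. Standard PL topology gives a deformation retraction $U_i\to X_i$, so each $U_i$ is contractible, together with the crucial intersection identity
\[
U_{i_1}\cap\cdots\cap U_{i_k}\;=\;N_{Y''}\bigl(X_{i_1}\cap\cdots\cap X_{i_k}\bigr),
\]
where $N_{Y''}(-)$ denotes the open regular neighborhood in $Y''$. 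The right side is empty when $X_{i_1}\cap\cdots\cap X_{i_k}=\varnothing$ and otherwise deformation retracts to this contractible intersection. Hence $\{U_i\}_{i=1}^n$ is a good open cover of $Y$ of the same cardinality, yielding $ct(X)\le n=c(X)$.

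The main obstacle is the intersection identity in the display. It is false in the first derived subdivision $Y'$, because a simplex of $Y'$ can join a vertex of $X_i$ to a vertex of $X_j$ without meeting $X_i\cap X_j$ at all. The extra barycenters inserted by the second subdivision force any simplex whose vertices land in both $X_i$ and $X_j$ to also contain a vertex of $X_i\cap X_j$; this is the second-derived-neighborhood theorem of PL topology. Supplying either a careful combinatorial proof of that statement, or the alternative homotopy-colimit argument that inductively builds $Y\simeq|N|$ from the good closed cover by Mayer--Vietoris over the nerve, is the step that requires real care.
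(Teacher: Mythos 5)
Your first direction (every space of covering type $n$ is homotopy equivalent to a complex with a good closed cover of size $n$) is correct and is essentially the paper's argument in a different dress: after Proposition \ref{CWcomplex} reduces to a simplicial complex with $n$ vertices, your subcomplexes $Z_i$ of the barycentric subdivision play exactly the role of the paper's sets of points with $i^{th}$ barycentric coordinate $\ge 1/n$, and your verification that all nonempty intersections are cones on a least face is sound.

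The converse direction, however, has a genuine gap, and it sits precisely where the paper does its real work. The theorem quantifies over \emph{every} finite CW complex $Y$ homotopy equivalent to $X$ equipped with a good closed cover by CW subcomplexes, so your opening reduction, ``after triangulation we may assume $Y$ is a simplicial complex and each $X_i$ a subcomplex,'' is not available: finite CW complexes need not be triangulable, and even when $Y$ is, there is no reason the given CW subcomplexes can be made simultaneously simplicial; replacing $Y$ by a homotopy equivalent simplicial complex does not transport the cover, since the contractibility of all the intersections is exactly the structure that a homotopy equivalence of the total space fails to preserve. (Within the honestly simplicial setting, by contrast, the step you flag as delicate is the easy part, and one subdivision suffices: a point of $Y'$ lies in a unique open simplex given by a chain $\sigma_0<\cdots<\sigma_k$ of simplices of $Y$, and since each $X_i$ is closed under faces, the point lies in the open-star neighborhood of $X_i$ if and only if $\sigma_0\in X_i$; hence the neighborhoods of the $X_i$ intersect exactly in the neighborhood of $X_{i_1}\cap\cdots\cap X_{i_k}$, which deformation retracts to it.) What you defer in your last sentence as ``the alternative homotopy-colimit argument that requires real care'' is in fact the substance of the theorem for general CW input, and it is what the paper supplies: from the closed cover it forms Segal's topological poset $P$ of nonempty intersections, shows $BP\to N$ is a homotopy equivalence to the nerve via the lemma underlying Quillen's Theorem A, shows $BP\to X$ is a homology isomorphism because point inverses are simplices, and promotes this to a homotopy equivalence by repeating the argument on the universal cover and applying the Whitehead theorem. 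Without that argument, or some substitute valid for arbitrary finite CW complexes, your proof establishes only the special case of simplicial pairs.
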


\begin{proof}
Suppose that $ct(X)=n$. By Proposition \ref{CWcomplex}, we may
suppose that $X$ is a simplicial complex with $n$ vertices.
We need to show that $X$ has a good closed cover with $n$ elements.
Let $X_i$ denote the closed subcomplex of $X$ consisting of 
points whose $i^{th}$ barycentric coordinate is $\ge1/n$. 
Then $\{X_i\}$ is a closed cover of $X$, because at least one
barycentric coordinate must be $\ge 1/n$. 
To see that it is a good closed cover, choose a subset $J$ 
of $\{1,...,n\}$ for which $X_J=\bigcap_{j\in J} X_j\ne\emptyset$. 
Then $X_J$ is contractible because, if $p$ is the point
whose $i^{th}$ barycentric coordinate is $1/|J|$ for $i\in J$ and
$0$ otherwise, the formula $h_t(x)=tx+(1-t)p$ defines
a deformation retraction from $X_J$ to the point $p$.
(To see that $h_t(x)$ is in $X_J$ for all $t$ with $0\le t\le1$,
note that for each $j\in J$, the $j^{th}$ coordinate of 
$h_t(x)$ is at least $1/n$.)
Thus $\{X_i\}_{i=1}^n$ is a good closed cover of $X$, as required.

Conversely, given a good closed cover $\{X_i\}_{i=1}^n$,
the nerve $N$ of this cover has $n$ vertices, 
and has a good open cover of size $n$.
We will construct a poset $P$ and
homotopy equivalences $X \leftarrow BP \to N$,
showing that $ct(X)\le n$. For this, we may assume $X$ is connected.

Let $S$ be the family of subsets $J$ of $\{1,...,n\}$ with
$X_J=\bigcap_{i\in S} X_i\ne\varnothing$; it is a poset 
($I\le J$ if $J\subseteq I$) and its realization $BS$ is the 
geometric nerve of the cover, $N$.
There is a functor from $S$ to contractible spaces,
sending $J$ to $X_J$: if $I\subseteq J$ then $X_J\subseteq X_I$.
Following Segal \cite[4.1]{S}, the disjoint union of all
nonempty intersections $X_J$ is a topological poset $P$, and
the obvious functor $P\to S$ yields a continuous function
from the geometric realization $BP$ to $N=BS$.
By the Lemma on p.\,98 of \cite{Q}
(the key ingredient in the proof of Quillen's Theorems~A and~B), 
$BP\to BS$ is a homotopy equivalence.
There is a canonical proper function $BP\map{p} X$ obtained from
the functor from $P$ to the trivial topological category with
$X$ its space of objects. It is an isomorphism on homology,
because the inverse image of a point $x\in X$ is a simplex, 
whose vertices correspond to the $i$ with $x\in X_i$.

Consider the universal covering space $\widetilde{X}\to X$; 
the inverse image of each $X_i$ is a disjoint union of spaces 
$X_{i,\alpha}$, each homeomorphic to $X_i$, and the 
$\{X_{i,\alpha}\}$ form a good cover of $\widetilde{X}$. 
(For each $i$, there is a non-canonical bijection between 
$\{(i,\alpha)\}$ and $\pi_1\,X$.) 
If $\widetilde{P}$ denotes the topological poset of intersections 
of the $\{X_{i,\alpha}\}$ then, as above,
$\tilde{p}: B\widetilde{P}\to\widetilde{X}$ is an isomorphism
on homology. Since both spaces are simply connected, the 
Whitehead Theorem implies that that $\tilde{p}$ is a homotopy equivalence.
By inspection, $B\widetilde{P}$ is the universal covering space 
of $BP$, so $\pi_1(BP)\cong\pi_1(\widetilde{X})$. 
This implies that $p$ is a homotopy equivalence, since for $n>1$ we have 
$\pi_nBP \cong \pi_nB\widetilde{P} \cong \pi_n\widetilde{X}\cong \pi_nX$.
\end{proof}


\section{Lower bounds for the covering type}\label{sec:small}

In general, the covering type of $X$ is not so easy to compute. 
A simple lower bound is provided by the proposition below, 
derived via a Mayer-Vietoris argument for the homology of $X$. 
We omit the proof, since it will follow from the more
general Theorem \ref{Poincare} below.

\begin{proposition}\label{bound:m+2}
Fix a field $k$ and
let $hd(X)$ denote the homological dimension of $X$, i.e.,
the maximum number such that $H_m(X,k)$ is nonzero. 
Then, unless $X$ is  empty or contractible, 
\begin{equation*}
ct(X)\geq hd(X)+2.
\end{equation*}
\end{proposition}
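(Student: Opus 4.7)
The plan is to establish the stronger assertion that whenever $X$ admits a good open cover with $n$ members, the reduced homology $\tilde{H}_m(X, k)$ vanishes for every $m \ge n-1$. Applying this with $n = ct(X)$, and using Definition \ref{def:ct} to replace $X$ by a homotopy equivalent space carrying a good cover of minimum size, will give $hd(X) \le ct(X) - 2$: the hypothesis that $X$ is neither empty nor contractible forces $ct(X) \ge 2$, so the range $m \ge ct(X) - 1$ lies entirely in degrees $\ge 1$, where reduced and unreduced homology agree.

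I would argue by induction on $n$. The base case $n = 1$ is trivial, since then $X$ is contractible. For the induction step, set $U = U_1$ and $V = U_2 \cup \cdots \cup U_n$. Then $\{U_2, \ldots, U_n\}$ is a good open cover of $V$ with $n-1$ members, and $\{U_1 \cap U_i\}_{i \ge 2}$ is a good open cover of $U \cap V$ with at most $n-1$ members: any intersection $\bigcap_{j \in J}(U_1 \cap U_j)$ equals $U_1 \cap \bigcap_{j \in J} U_j$, which is either empty or contractible because the original cover $\{U_i\}$ is good. The Mayer-Vietoris sequence in reduced homology then supplies the exact sequence
\[
\cdots \to \tilde{H}_m(U \cap V) \to \tilde{H}_m(U) \oplus \tilde{H}_m(V) \to \tilde{H}_m(X) \to \tilde{H}_{m-1}(U \cap V) \to \cdots
\]
For $m \ge n-1$, the inductive hypothesis applied to $V$ gives $\tilde{H}_m(V) = 0$; applied to $U \cap V$ it gives $\tilde{H}_{m-1}(U \cap V) = 0$; and $\tilde{H}_m(U) = 0$ by contractibility of $U$. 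Exactness forces $\tilde{H}_m(X) = 0$, completing the induction.

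The main obstacle is really just the care needed to certify that the inherited cover of $U \cap V$ is itself good, which is what the stability of the contractibility condition under further intersection with $U_1$ provides. A minor wrinkle is that some of the $U_1 \cap U_i$ may be empty, so that $U \cap V$ inherits strictly fewer than $n - 1$ pieces; this only strengthens the inductive hypothesis available on $U \cap V$, so it causes no trouble. Everything else is bookkeeping in the long exact sequence.
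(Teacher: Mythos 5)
Your proof is correct and is essentially the paper's own argument: the paper derives Proposition \ref{bound:m+2} from Theorem \ref{Poincare}, whose proof is exactly your Mayer--Vietoris induction on the size of a good cover (splitting off one member $U_1$ from the union of the rest and using that the induced cover of the intersection is again good with at most $n-1$ members). The only difference is that you track just the vanishing of homology in degrees $\ge n-1$, whereas the paper's induction also records the binomial-coefficient bounds on all Betti numbers.
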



\begin{example}\label{sphere}
The sphere $S^{m}$ has $ct(S^{m})=m+2$. This is
clear for $S^{0}$ and $S^{1}$, and the general case follows from the
upper bound in Proposition \ref{bound:m+2}, combined with the
observation that $S^m$ is homeomorphic to the boundary of the
$(m\!+\!1)$-simplex, which has $m+2$ maximal faces.

Alternatively, we could get the upper bound from the
suspension formula $ct(SX)\leq 1+ct(X)$. This formula is a simple exercise
which will be generalized later on (Theorem \ref{thm:cone}).
\end{example}


A stronger lower bound for $ct(X)$ uses the Poincar\'{e} polynomial%
\begin{equation*}
P_{X}(t)=h_{0}+h_{1}t+...+h_{m}t^{m}
\end{equation*}%
where $h_{i}$ is the rank of the homology $H_{i}(X)$ or 
cohomology $H^{i}(X)$ (with coefficients in a field). 
We partially order the set of polynomials in $%
\Z[t]$ by declaring that $P\leq Q$ if and only if all the
coefficients of $P$ are smaller or equal to the respective coefficients of $%
Q.$ We now have the following theorem:

\begin{theorem}\label{Poincare} 
Let $P_X(t)$ be the Poincar\'e polynomial of $X$ and let $n$
be its covering type. If $X$ is not empty then: 
\begin{equation*}
P_X(t)\leq\frac{(1+t)^{n-1}-1}{t}+1 = 
n + \binom{n-1}2 t + \binom{n-1}3 t^2 + \cdots+ t^{n-2}.
\end{equation*}
That is, $h_0\le n$, $h_1\le\binom{n-1}2$,...,$h_{n-2}\le1$ and $h_i=0$
for $i\ge n-1$.
\end{theorem}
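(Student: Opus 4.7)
My plan is to prove the bound by induction on the strict covering type $n$, with the base case $n=1$ being $X$ contractible ($P_X = 1$). By Proposition \ref{CWcomplex} and the homotopy invariance of $P_X(t)$, I may assume that $X$ itself carries a good open cover $\{U_1, \ldots, U_n\}$ with $n = ct(X)$.

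For the inductive step, let $V = U_n$ and $U = U_1 \cup \cdots \cup U_{n-1}$. Two inheritance facts drive the argument: $\{U_i\}_{i<n}$ is a good cover of $U$ of size $n-1$, so $U$ has strict covering type at most $n-1$; and when $U \cap V \ne \varnothing$, the collection $\{U_i \cap V\}_{i<n}$ is a good cover of $U \cap V$ of size at most $n-1$, because $\bigcap_{i \in J}(U_i \cap V) = U_{J \cup \{n\}}$ is either empty or contractible. Thus both $U$ and $U \cap V$ (when nonempty) are controlled by the inductive hypothesis.

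I would then split into two cases. If $U \cap V = \varnothing$ then $X = U \sqcup V$, so $P_X = P_U + 1 \le \frac{(1+t)^{n-2}-1}{t} + 2$ by induction, and the claim reduces to the elementary polynomial inequality $(1+t)^{n-2} \ge 1$. If $U \cap V \ne \varnothing$ I would apply the reduced Mayer--Vietoris long exact sequence for $X = U \cup V$; its rank inequality reads
\[
\tilde P_X(t) \;\le\; \tilde P_U(t) + \tilde P_V(t) + t\,\tilde P_{U \cap V}(t),
\]
and combining $\tilde P_V = 0$ (since $V$ is contractible) with the inductive bounds $\tilde P_U,\,\tilde P_{U \cap V} \le \frac{(1+t)^{n-2}-1}{t}$ yields $\tilde P_X \le (1+t)\cdot\frac{(1+t)^{n-2}-1}{t} = \frac{(1+t)^{n-1}-1}{t} - 1$, i.e.\ $P_X \le \frac{(1+t)^{n-1}-1}{t}$, strictly inside the target.

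I expect the main subtlety to be the asymmetry between the two branches: the connected (Mayer--Vietoris) branch produces a bound with constant term $n-1$, while only the disjoint branch (extremized by $n$ isolated points, where $h_0 = n$) saturates the constant term of the theorem. The ``$+1$'' in the stated bound is present exactly to accommodate this disconnected extremal case, and routing both branches through the same strict-covering-type hypothesis on $U$ and $U \cap V$ is the main bookkeeping point. Naive use of unreduced Mayer--Vietoris is insufficient in the connected case (it overshoots by a spurious $t$ term), which is why I would insist on the reduced version.
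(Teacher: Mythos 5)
Your proposal is correct and follows essentially the same route as the paper: induct on the covering type, peel off one element $V$ of a minimal good cover, observe that $U=\bigcup_{i<n}U_i$ and $U\cap V$ inherit good covers of size at most $n-1$, and apply Mayer--Vietoris to $X=U\cup V$. Your use of reduced homology with a separate disjoint case, and the packaging as a polynomial inequality, are only bookkeeping refinements of the paper's coefficientwise unreduced Mayer--Vietoris estimate (which bounds $h_0\le n$ directly and handles the empty-intersection case without splitting).
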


\begin{proof}
We proceed by induction on $n=ct(X)$. If $n$ is 1 or 2 then
$P_X(t)$ is 1 or 2, respectively,
and the inequality is trivial. 

If $X=\bigcup_{k=1}^n X_k$, 
set $Y=\bigcup_{k\ne1}X_k$ and note that $ct(Y)$, $ct(Y\cap X_1)$ are at
most $n-1$. From the Mayer-Vietoris sequence for $X=X_1\cup Y$, 
\begin{equation*}
H_k(X_1)\oplus H_k(Y)\to H_k(X)\to H_{k-1}(X_1\cap Y),
\end{equation*}
and the inductive hypothesis, we see that $h_0(X)\le n$ and for $k>0$ 
\begin{equation*}
h_k(X) \le h_k(Y)+h_{k-1}(X_1\cap Y) \le \binom{n-2}{k}+\binom{n-2}{k-1} = 
\binom{n-1}{k}. \qedhere
\end{equation*}
\end{proof}

\smallskip
\begin{remark}
The lower bounds in Theorem \ref{Poincare} are not optimal
for non-connected spaces, such as discrete sets, because
the covering type of a non-connected space is the sum of
the covering types of its components. For this reason, we
shall concentrate on the covering type of connected spaces.
In particular, we will see that that the bound in Theorem \ref{Poincare}
is optimal when $X$ is 1-dimensional and connected.
\end{remark}

\section{Graphs}\label{sec:graphs}

Every 1--dimensional CW complex is a graph, and every connected graph is
homotopy equivalent to a bouquet of $h=1+E-V$ circles, where
$V$ and $E$ are the number of vertices and edges, respectively.
Since $n=ct(X)$ is an integer, the bound in Theorem \ref{Poincare}, 
that $\binom{n-1}2\ge h$, is equivalent to the inequality
$n\ge\lceil x\rceil$, where $x=\frac{3+\sqrt{1+8h}}2$ and
the {\it ceiling} $\lceil{x}\rceil$ of $x$ 
denotes the smallest integer $\ge x$.

\begin{proposition}\label{ct:bouquets} 
When $X_h$ is a bouquet of $h$ circles then 
\[
ct(X_h)=\left\lceil{\frac{3+\sqrt{1+8h}}2\;}\right\rceil.
\]
That is, $ct(X_h)$ is the unique integer $n$ such that
\begin{equation*}
\binom{n-2}2 < h \le\binom{n-1}2.
\end{equation*}
\end{proposition}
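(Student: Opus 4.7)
The lower bound $ct(X_h) \ge n$ is immediate from Theorem~\ref{Poincare} applied in degree one: if $ct(X_h) = m$, then $h = h_1(X_h) \le \binom{m-1}{2}$, so the defining inequality $\binom{n-2}{2} < h$ forces $m \ge n$.

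For the upper bound I would use Theorem~\ref{open=closed} and exhibit a connected graph $X$ homotopy equivalent to $X_h$ together with a good closed cover of size $n$. Since $h \le \binom{n-1}{2} = \binom{n}{2}-(n-1)$, one can choose a connected simple graph $\Gamma$ on vertex set $\{1,\dots,n\}$ with exactly $h+n-1$ edges: start from any spanning tree on the $n$ vertices and throw in $h$ additional edges. Let $X$ be the graph obtained from $\Gamma$ by subdividing each edge once, inserting a new midpoint $m_{ij}$ into each edge $\{i,j\}$, and let $X_i$ be the closed star of the original vertex $i$ in $X$, namely the union of $i$ with the edges $[i,m_{ij}]$ for all neighbors $j$ of $i$ in $\Gamma$.

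I would then verify the three required properties. Each $X_i$ is a star graph, hence a contractible tree. For $i\neq j$, the intersection $X_i\cap X_j$ is the single point $\{m_{ij}\}$ when $\{i,j\}$ is an edge of $\Gamma$, and is empty otherwise, because the only cell shared by the two stars is the midpoint of a shared edge. Every triple intersection is empty, since each midpoint $m_{ij}$ lies in only the two stars $X_i$ and $X_j$. Thus $\{X_i\}_{i=1}^{n}$ is a good closed cover of $X$. An Euler-characteristic count yields $V(X)=n+(h+n-1)$ and $E(X)=2(h+n-1)$, so $\chi(X)=1-h$; together with the connectedness of $\Gamma$ this identifies the homotopy type of $X$ as $X_h$.

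The construction is essentially forced by the combinatorics, and presents no genuine obstacle; the only bookkeeping is to observe that the defining constraint $h \le \binom{n-1}{2}$ is precisely the condition needed to accommodate $h+n-1$ edges in a simple graph on $n$ vertices, which explains why the Poincaré bound of Theorem~\ref{Poincare} is matched by a tight explicit construction.
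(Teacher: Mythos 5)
Your proof is correct, and while the lower bound is exactly the paper's argument (Theorem \ref{Poincare} in degree one), your upper bound goes by a genuinely different route. The paper builds explicit planar pictures: a triangle with $n-3$ L-shaped arcs in its interior realizes a good cover of size $n$ for a bouquet of $\binom{n-1}{2}$ circles (Figures \ref{fig:h=5,6}), and intermediate values of $h$ are handled by erasing portions of the interior arcs (Figure \ref{fig:7-9 circles}). You instead take any connected simple graph $\Gamma$ on $n$ vertices with $h+n-1$ edges, subdivide each edge once, and cover by the closed stars of the original vertices; the verification that pairwise intersections are single midpoints and triple intersections are empty is exactly right (and the subdivision is genuinely needed -- without it, three mutually adjacent vertices would have closed stars whose pairwise intersections are an edge together with an isolated vertex, hence not contractible). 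Your construction is more systematic: its nerve is $\Gamma$ itself, it makes transparent why the Poincar\'e bound is sharp (the constraint $h\le\binom{n-1}2$ is precisely the edge-count bound for simple graphs on $n$ vertices), and it dispenses with the case-by-case pictures; the paper's figures, on the other hand, are concrete planar models that double as illustrations and require no appeal to the existence of an abstract graph. Both arguments conclude via Theorem \ref{open=closed} (the paper's pictures are likewise closed covers by subcomplexes), so the logical dependencies are the same.
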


\begin{proof}
By explicitly solving the displayed quadratic inequalities, we see
that the unique integer $n$ 
satisfying the displayed inequalities is
$\lceil\frac{3+\sqrt{1+8h}}2\rceil$. 
Theorem \ref{Poincare} implies that $ct(X_h)\ge n$. 
When $h=1$ (a circle),
we saw that the lower bound $ct=3$ is achieved. For $h=2,3$ the lower bound
is~4, and we saw in Example \ref{ct(figure8)} that it is also an upper
bound, so $ct(X_h)=4$ in these cases.

When $h$ is 6 or 10 we show that $ct(X_h)$ is 5 or 6, respectively, by
generalizing the pattern of Example \ref{ct(figure8)} to introduce more
L-shaped lines into the interior of a triangle, as shown in 
Figure \ref{fig:h=5,6}. For a bouquet $X_h$ of $h=\binom{n-1}2$ circles, 
the same construction (using $n-3$ internal L-shaped lines) 
shows that $ct(X_h)=n$.
If $h$ and $n$ satisfy the strict inequality of the proposition, 
we can construct a complex $X'$ like Figure \ref{fig:h=5,6} for 
$\binom{n-1}{2}$ circles and erase portions of the
interior lines to obtain a complex $X$ homotopic to a bouquet of 
$h$ circles, as illustrated by Figure \ref{fig:7-9 circles}. 
This shows that $ct(X_{h})\leq n$, as required.
\end{proof}
\begin{center}
\begin{figure}
\setlength{\unitlength}{1cm} 
\begin{picture}(2,2.2)
\put(0,0){\line(0,1){2}}
\put(0,0){\line(1,0){2}}
\put(2,0){\line(-1,1){2.0}}
\put(0,1.34){\line(1,0){.66}}
\put(.66,0){\line(0,1){1.34}}
\thicklines
\put(0,.66){\line(1,0){1.34}}
\put(1.34,0){\line(0,1){.66}}
\end{picture}
\qquad 
\begin{picture}(2,2.2)
\put(0,0){\line(0,1){2}}
\put(0,0){\line(1,0){2}}
\put(2,0){\line(-1,1){2.0}}
\put(0,1){\line(1,0){1}}
\put(1,0){\line(0,1){1}}
\put(0,1.5){\line(1,0){.5}}
\put(0.5,0){\line(0,1){1.5}}
\thicklines
\put(0,.5){\line(1,0){1.5}}
\put(1.5,0){\line(0,1){.5}}
\end{picture}\par
\caption{$ct(X_h)=5,6$ for bouquets of $6$ and 10 circles}
\label{fig:h=5,6} 
\end{figure}
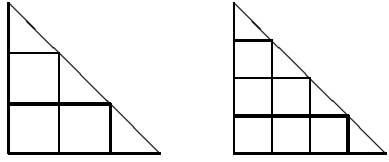
\end{center}
\begin{center}
\begin{figure}
\setlength{\unitlength}{1cm}
\begin{picture}(2,2.2)
\put(0,0){\line(0,1){2}}
\put(0,0){\line(1,0){2}}
\put(2,0){\line(-1,1){2.0}}
\put(.5,1){\line(1,0){.5}}
\put(1,0){\line(0,1){1}}
\put(0.5,0){\line(0,1){1.5}}
\thicklines
\put(.5,.5){\line(1,0){1.0}}
\put(1.5,0){\line(0,1){.5}}
\end{picture}
\qquad
\begin{picture}(2,2.2)
\put(0,0){\line(0,1){2}}
\put(0,0){\line(1,0){2}}
\put(2,0){\line(-1,1){2.0}}
\put(.5,1){\line(1,0){.5}}
\put(1,0){\line(0,1){1}}
\put(0.5,0){\line(0,1){1.5}}
\thicklines
\put(0,.5){\line(1,0){1.5}}
\put(1.5,0){\line(0,1){.5}}
\end{picture}
\qquad
\begin{picture}(2,2.2)
\put(0,0){\line(0,1){2}}
\put(0,0){\line(1,0){2}}
\put(2,0){\line(-1,1){2.0}}
\put(0,1){\line(1,0){1}}
\put(1,0){\line(0,1){1}}
\put(0.5,0){\line(0,1){1.5}}
\thicklines
\put(0,.5){\line(1,0){1.5}}
\put(1.5,0){\line(0,1){.50}}
\end{picture}\par
\caption{$ct(X_h)=6$ for bouquets of 7--9 circles}
\label{fig:7-9 circles}
\end{figure}
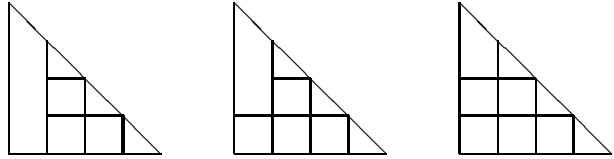
\end{center}

\goodbreak\medskip
\section{Classical surfaces}\label{sec:surfaces}

The literature contains some pretty upper bounds for the covering
type of an oriented surface of genus $g$ (a torus with $g$ holes).
We have already seen in Example \ref{sphere} that the 2-sphere has
covering type~4; this is the case $g=0$.

\smallskip
\paragraph{\it Torus}
For the torus $T$ (the case $g=1$), an upper bound $ct(T)\le7$
comes from the 7-country map in Figure \ref{fig:torus}, 
first described by Haewood in 1890 \cite{Hae} in connection with
the map coloring problem. The map is dual to an embedding of the 
complete graph on 7 vertices in the torus, so
each of the 7 countries is a hexagon, each pair meets in a face,
and any three countries meet in a point. 
We shall see in Theorem \ref{ct:torus} below that the covering
type of the torus is indeed~7.

\begin{center}
\begin{figure}
\includegraphics[height=4cm]{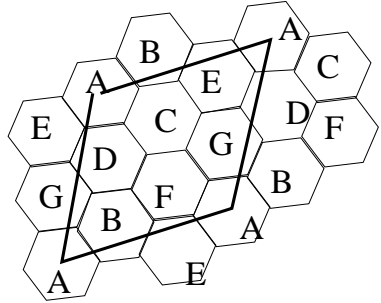} 
\par
\caption{Torus with 7 regions}\label{fig:torus}
\end{figure}
\end{center}

\begin{center}
\begin{figure}
\includegraphics[height=4.3cm]{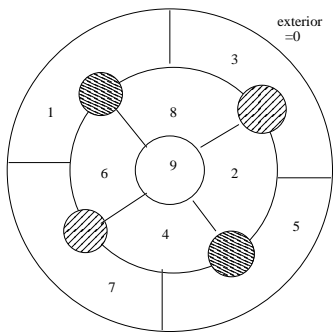} \par
\caption{Surface of genus 2, covered by 10 regions}
\label{fig:g=2}
\end{figure}
\end{center}

\paragraph{\it Genus 2}
An upper bound for $S_2$ (the oriented surface of genus~2) is~10.
This comes from the example of a 10-region good covering of $S_2$
given by Jungerman and Ringel in \cite[p.\,125]{JR}, and reproduced in
Figure \ref{fig:g=2}.  To construct it, 
we start with a 10-region map on the 2-sphere (countries labelled 0--9),
cut out the opposing shaded circles and identify their edges.
We do not know the precise value of $ct(S_2)$.
We will see in Proposition \ref{ct>5} below that $ct(S_2)\ge6$,
so $6\le ct(S_2)\le10$.

\smallskip
Before giving more precise lower bounds for $ct(X)$, 
we record a simple result, which shows that $ct(X)>4$ for every 
closed surface $X$ except the 2-sphere.  We shall write $H^n(X)$ for 
the cohomology of $X$ with coefficients in a fixed field.

\begin{lemma}\label{lem:cup}
Suppose that $X=A\cup B$ and that $H^1(A)=H^1(B)=0$.
Then the cohomology cup product $H^1(X)\times H^1(X)\to H^2(X)$ is zero. 
\end{lemma}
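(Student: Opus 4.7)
The plan is to exploit the naturality of the cup product together with the relative cohomology long exact sequences of the pairs $(X,A)$ and $(X,B)$. The key observation is that the relative cup product lands in $H^2(X, A\cup B) = H^2(X,X) = 0$.

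First I would use the vanishing of $H^1(A)$ to lift classes. The long exact sequence of the pair $(X,A)$ contains the segment $H^1(X,A)\to H^1(X)\to H^1(A)=0$, so the restriction map $H^1(X,A)\to H^1(X)$ is surjective. Hence any $\alpha \in H^1(X)$ admits a lift $\tilde\alpha \in H^1(X,A)$. Symmetrically, any $\beta\in H^1(X)$ lifts to $\tilde\beta\in H^1(X,B)$.

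Next I would invoke the relative cup product
\begin{equation*}
\cup : H^p(X,A)\otimes H^q(X,B)\longrightarrow H^{p+q}(X,\,A\cup B),
\end{equation*}
which exists whenever $\{A,B\}$ is an excisive pair (see Hatcher, \S 3.2). For subcomplexes of a CW complex, or for open subsets whose union is $X$, this hypothesis is automatic. Since $A\cup B = X$, the target group is $H^2(X,X)=0$, so $\tilde\alpha\cup\tilde\beta = 0$ there. Finally, naturality of the cup product with respect to the map of pairs $(X,\varnothing)\to(X,A\cup B)$ sends $\tilde\alpha\cup\tilde\beta$ to $\alpha\cup\beta \in H^2(X)$, forcing $\alpha\cup\beta=0$.

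The only real obstacle is ensuring that the pair $\{A,B\}$ is excisive so that the relative cup product is defined and the identification $H^2(X,A\cup B)=H^2(X,X)=0$ is valid. In the setting of this paper (finite CW complexes with good covers, whether realized as open subsets or as subcomplexes), this is standard, so no technical circumlocution is required.
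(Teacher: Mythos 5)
Your proposal is correct and follows essentially the same route as the paper's own proof: lift the two classes to $H^1(X,A)$ and $H^1(X,B)$ via the long exact sequences, and observe that the relative cup product lands in $H^2(X,A\cup B)=H^2(X,X)=0$. Your added remark about excisiveness of $\{A,B\}$ is a reasonable technical footnote but does not change the argument.
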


\begin{proof}
Fix $x,y\in H^1(X)$. Since $x$ vanishes in $H^1(A)$, it lifts to
an element $x'$ in $H^1(X,A)$. Similarly, $y$ lifts to an element
$y'$ in $H^1(X,B)$. Then $x\cup y$ is the image of $(x',y')$
under the composition
\[
H^1(X,A) \times H^1(X,B) \map{\cup} H^1(X,A\cup B) \map{} H^1(X),
\]
where $\cup$ is the relative cup product; see \cite[p.\,209]{Hatcher}
Since $X=A\cup B$, $H^1(X,A\cup B)=0$, so $x\cup y=0$.
\end{proof}

\begin{proposition}\label{ct>5}
If the cohomology cup product is nonzero on $H^1(X)$,
then $ct(X)\ge6$.
\end{proposition}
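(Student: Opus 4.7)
The plan is to prove the contrapositive: assuming $ct(X)\le 5$, I show that the cup product on $H^1(X)$ vanishes. By Proposition \ref{CWcomplex} I may replace $X$ by a homotopy equivalent simplicial complex with a good cover $\{U_1,\ldots,U_n\}$ of size $n=ct(X)\le 5$. The core idea is to split the index set into two blocks $I\sqcup J=\{1,\ldots,n\}$ so that both $A=\bigcup_{i\in I}U_i$ and $B=\bigcup_{j\in J}U_j$ satisfy $H^1=0$; Lemma \ref{lem:cup} then kills $\alpha\cup\beta$ for every $\alpha,\beta\in H^1(X)$.

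For $n\le 4$ this is routine: I partition into two blocks of size at most $2$. A union of one or two of the $U_i$'s has strict covering type $\le 2$, so it is either contractible or the disjoint union of two contractibles, and in either case $H^1=0$ (by Theorem \ref{Poincare}, or directly by Mayer--Vietoris with contractible pieces and contractible intersection).

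The substantive case is $n=5$, where every partition forces a block of size $\ge 3$. I will look for a 3-element subset $J\subseteq\{1,\ldots,5\}$ with $H^1\bigl(\bigcup_{j\in J}U_j\bigr)=0$ and let $I$ be its $2$-element complement. By the analysis in Example \ref{rem:ct=3}, $H^1\bigl(\bigcup_{j\in J}U_j\bigr)\ne 0$ can occur only when that union is homotopy equivalent to $S^1$, which happens precisely when every pair $U_a\cap U_b$ for $a,b\in J$ is nonempty while $\bigcap_{j\in J}U_j=\emptyset$---that is, when $J$ spans an \emph{empty triangle} in the nerve $N$ of the cover.

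The main obstacle is the pathological configuration in which \emph{every} 3-subset of $\{1,\ldots,5\}$ is an empty triangle. But this forces all $\binom{5}{2}=10$ pairwise intersections to be nonempty while every triple intersection is empty, so $N$ must be the 1-skeleton $K_5$ of $\Delta^4$. In particular $N$ has no 2-simplices, so $H^2(N)=0$; the Nerve Lemma then gives $H^2(X)\cong H^2(N)=0$, and the cup product into $H^2(X)$ is trivially zero. Thus either branch of the dichotomy kills the cup product, completing the argument.
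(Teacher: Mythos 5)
Your argument is correct and is essentially the paper's own proof: both hinge on splitting the five sets into a block of three whose union has trivial $H^1$ (via Example \ref{rem:ct=3}) and a block of two, so that Lemma \ref{lem:cup} kills the cup product, and both dispose of the exceptional configuration (all triple intersections empty) by noting the nerve then has no $2$-simplices, forcing $H^2(X)=0$. The only differences are cosmetic: you argue the contrapositive and invoke the Nerve Lemma with nerve $K_5$, whereas the paper argues by contradiction, choosing a triple with nonempty intersection and using the vanishing of the \v{C}ech complex in degree $2$.
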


\begin{proof}
The assumption implies that $H^2(X)\ne0$, so $ct(X)\ge4$
by Theorem \ref{Poincare}.
If the covering type were 4, let $A$ be the union of the first two
and $B$ the union of the last two subspaces. Since $A$ and $B$ are
homotopic to either one or two points, Lemma \ref{lem:cup} implies
that the cup product is zero in $H^*(X)$, contrary to fact.
This shows that $ct(X)\ne4$.

Now suppose that $\{ X_i\}_{i=1}^5$ is a good cover of $X$.
If $X_i\cap X_j\cap X_k=\varnothing$ for all $i,j,k$ then 
the \v{C}ech complex associated to the cover would have 
zero in degree~2. Since the cohomology of this complex 
is $H^{\ast}(X)$, that would contradict the assumption that
$H^2(X)\ne0$. By Example \ref{rem:ct=3}, there exists $i,j,k$
so that $A=X_i\cup X_j\cup X_k$ has $H^1(A)=0$. Let $B$ be
the union of the other two $X_m$; we also have $H^1(B)=0$.
By Lemma \ref{lem:cup}, the cohomology product is zero.
This contradiction shows that $ct(X)\ne5$.
\end{proof}

\begin{theorem}\label{ct:torus}
The covering type of the torus $T$ is exactly 7.
\end{theorem}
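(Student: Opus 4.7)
The upper bound $ct(T)\le 7$ comes from Heawood's 7-country map of Figure \ref{fig:torus}: it is a good closed cover by hexagonal disks, with pairwise intersections single edges, triple intersections single vertices, and no fourfold intersections. All nonempty intersections are contractible, and Theorem \ref{open=closed} converts this into a good open cover of size $7$.

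For the lower bound, Proposition \ref{ct>5} gives $ct(T)\ge 6$ because the cup product $H^1(T)\otimes H^1(T)\to H^2(T)$ is nontrivial. To rule out $ct(T)=6$, assume for contradiction that $\{X_1,\dots,X_6\}$ is a good cover of a space $X\homotopic T$, with nerve $N\homotopic T$. The plan is to find a partition $\{1,\dots,6\}=A\sqcup B$ with $H^1(X_A)=H^1(X_B)=0$; Lemma \ref{lem:cup} applied to $X=X_A\cup X_B$ would then force the cup product on $H^1(X)$ to vanish, contradicting the structure of $T$.

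Following the strategy of Proposition \ref{ct>5}, $H^2(X)\ne 0$ implies via the \v{C}ech computation that some triple intersection $X_i\cap X_j\cap X_k$ is nonempty. Then the nerve of $\{X_i,X_j,X_k\}$ is the full 2-simplex, so $A:=X_i\cup X_j\cup X_k$ is contractible and serves as our first half of the partition. Taking $B$ to be the union of the complementary triple $\{X_l,X_m,X_n\}$, we are done unless $H^1(B)\ne 0$; by Example \ref{rem:ct=3} the latter forces $B\homotopic S^1$, meaning the complementary pairs $X_l\cap X_m$, $X_l\cap X_n$, $X_m\cap X_n$ are all nonempty while $X_l\cap X_m\cap X_n$ is empty.

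The heart of the proof, and its main obstacle, is to show that this obstruction cannot hold simultaneously for every triple of indices with nonempty triple intersection. The plan is a combinatorial analysis of the 6-vertex nerve $N$: using the Euler relation $f_2-f_1+6=0$ forced by $\chi(N)=0$, the rank bounds of Theorem \ref{Poincare} (which require $H_1(N)=\mathbb{Z}^2$ and $H_2(N)=\mathbb{Z}$), and the nontriviality of the cup product, one shows that some triple $\{i,j,k\}$ with $X_i\cap X_j\cap X_k\ne\varnothing$ must have complement not inducing $\partial\Delta^2$, producing the required partition. Equivalently, one verifies by enumeration that no 6-vertex simplicial complex realizes the homotopy type of $T$. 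Tracking edges, 2-simplices, and the induced cup-product structure across the few possible configurations is the delicate step.
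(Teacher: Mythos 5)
Your upper bound and the reduction to ruling out a good cover $\{X_1,\dots,X_6\}$ match the paper, and you correctly locate the obstacle: a nonempty triple intersection gives one triple whose union $B$ has $H^1(B)=0$, but the complementary triple $A$ may be homotopy equivalent to $S^1$, so you cannot apply Lemma \ref{lem:cup} as stated (which wants $H^1$ of \emph{both} pieces to vanish). The gap is that you never overcome this obstacle: your last paragraph is a plan, not a proof. Worse, the plan as sketched has problems. The nerve $N$ of a 6-element good cover need not be $2$-dimensional (four or more sets could have a common point), so the Euler relation $f_2-f_1+6=0$ is unjustified; and the statement ``no 6-vertex simplicial complex has the homotopy type of $T$'' is essentially equivalent to the theorem you are trying to prove, not an independently verified fact you can cite --- it is stronger than the classical non-existence of a 6-vertex \emph{triangulation} of the torus (M\"obius's 7-vertex triangulation being minimal), since arbitrary nerves are allowed. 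Carrying out such an enumeration of 6-vertex complexes together with their cup products would be a substantial piece of work that the proposal does not do.

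The paper avoids the enumeration entirely by refining the cup-product argument rather than the combinatorics. With $B=X_4\cup X_5\cup X_6$ chosen so that $H^1(B)=0$ and $A=X_1\cup X_2\cup X_3$ the complementary union, Example \ref{rem:ct=3} gives $\dim H^1(A)\le 1$; since $\dim H^1(T)=2$, the restriction $H^1(X)\to H^1(A)$ has a nonzero element $x$ in its kernel. One does not need $H^1(A)=0$: lift $x$ to $x'\in H^1(X,A)$, lift any $y$ independent of $x$ to $y'\in H^1(X,B)$ (possible because $H^1(B)=0$), and the relative cup product argument from the proof of Lemma \ref{lem:cup} forces $x\cup y=0$, contradicting the nondegeneracy of the pairing on $H^1(T)$, since independent classes on the torus have nonzero product. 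If you replace your enumeration step with this asymmetric use of the lifting argument --- only one class needs to die on $A$, and the $2$-dimensionality of $H^1(T)$ guarantees such a class exists --- your proof closes.
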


\begin{proof}
By Figure \ref{fig:torus} and Proposition \ref{ct>5},
the covering type of $T$ is either $6$ or $7$. 
Suppose the covering type were $6$, i.e., that $T$ had
a good cover by 6 subcomplexes $X_i$, $i=1,\dots,6$.
As in the proof of Proposition \ref{ct>5}, the fact that
$H^2(T)\ne0$ implies that there exist $i,j,k$ 
such that $X_{i}\cap X_{j}\cap X_{k}\ne\varnothing.$
By Example \ref{rem:ct=3},
$H^1(X_{i}\cup X_{j}\cup X_{k})=0$.
%
%
Therefore, after reordering the indices of the
cover, we may assume that $H^{1}(B)=0,$ with $B=X_{4}\cup X_{5}\cup X_{6}.$
Set $A=X_{1}\cup X_{2}\cup X_{3}$, 
and note that, since $\dim H^1(A)\le1$ and $\dim H^1(X)=2$, 
there is an $x$ in the kernel of $H^1(X)\to H^1(A)$. 
Lifting it to an element $x'$ in $H^1(X,A)$, and lifting an
independent element $y$ of $H^1(X)$ to $H^1(X,B)$, the proof of Lemma
\ref{lem:cup} shows that $x\cup y=0$ in $H^2(X)$, which is not the case.
\end{proof}

\goodbreak
\smallskip
If $X$ is a non-orientable surface, its {\it genus} $q$ is the
dimension of $H^1(X,\Z/2)$. Up to homeomorphism, there is a unique
non-orientable surface $N_q$ of genus $q$ for each integer $q\ge1$,
with $N_1$ the projective plane and $N_2$ the Klein bottle.

\smallskip\goodbreak

\paragraph{\it Projective plane}
Here is a pretty construction of 
a good covering of the projective plane $\RP^2$ by 6 regions.
%
The antipode on the 2-sphere sends the regular dodecahedron to itself
(preserving the 20 vertices, 30 edges edges and 12 faces);
the quotient by this action defines a good polyhedral covering of
the projective plane $\RP^2$ by 6 faces (with 10 vertices and 15 edges).

This construction yields 6-colorable map on $\RP^2$ whose 
regions are pentagons; this is illustrated in Figure \ref{fig:Petersen}.
We remark that the boundary of this polyhedral cover is the 
{\it Petersen graph} (see \cite[Ex.\,4.2.6(5)]{Gross.Tucker}).
\begin{center}
\begin{figure}
\includegraphics[height=4.3cm]{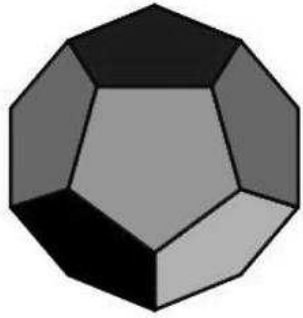} \par
\caption{The projective plane, covered by 6 regions}
\label{fig:Petersen}
\end{figure}
\end{center}

\begin{theorem}
The covering type of $\RP^2$ is $6.$
\end{theorem}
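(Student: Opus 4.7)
The plan is to establish the two inequalities $ct(\RP^2)\le 6$ and $ct(\RP^2)\ge 6$ separately.

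For the upper bound, the work is already done by the construction preceding the theorem: the antipodal quotient of the regular dodecahedron on $S^2$ provides a polyhedral decomposition of $\RP^2$ into $6$ pentagonal faces (with $10$ vertices and $15$ edges) whose nonempty intersections are single edges or vertices, hence contractible. By Theorem \ref{open=closed}, this good closed cover shows $ct(\RP^2)\le 6$.

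For the lower bound, I would invoke Proposition \ref{ct>5} with coefficients in the field $k=\mathbb{F}_2$. The cohomology $H^*(\RP^2;\mathbb{F}_2)$ is the truncated polynomial ring $\mathbb{F}_2[x]/(x^3)$ with $x\in H^1$, so $x\cup x$ is the nonzero generator of $H^2(\RP^2;\mathbb{F}_2)$. Thus the cup product is nonzero on $H^1(\RP^2;\mathbb{F}_2)$, and Proposition \ref{ct>5} gives $ct(\RP^2)\ge 6$. The only thing to check is that the argument of Proposition \ref{ct>5} indeed works with $\mathbb{F}_2$ coefficients; this is automatic since Lemma \ref{lem:cup}, the Mayer--Vietoris sequence, and the \v{C}ech-to-cohomology comparison used there hold over any field.

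Combining the two bounds yields $ct(\RP^2)=6$. No step is really an obstacle here, since the hard work was built into Proposition \ref{ct>5}; the only conceptual point is the choice of coefficients, and the only concrete point is recognizing the identification and contractibility of intersections in the dodecahedral cover of Figure \ref{fig:Petersen}.
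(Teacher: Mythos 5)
Your proposal is correct and follows essentially the same route as the paper: the dodecahedral (Petersen graph) decomposition gives the upper bound $ct(\RP^2)\le 6$, and the nonvanishing of the cup product on $H^1(\RP^2;\Z/2)$ combined with Proposition \ref{ct>5} gives the lower bound. Your added remark about checking that the argument works with $\mathbb{F}_2$ coefficients is a reasonable precaution, but it is already built into the paper's convention of taking cohomology with coefficients in an arbitrary fixed field.
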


\begin{proof}
The good cover associated to the embedding of the Petersen graph
(giving the 6-coloring in Figure \ref{fig:Petersen}) 
shows that $6$ is an upper bound for the
covering type. Since the cup product on $H^1(\RP^2;\Z/2)$ is
nonzero, $ct(\RP^2)\ge6$ by Proposition \ref{ct>5}.
\end{proof}


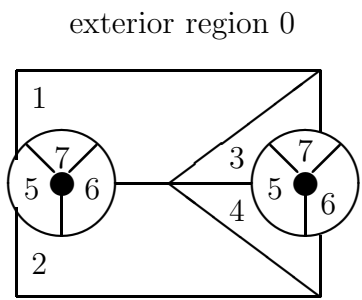
\begin{figure}
\setlength{\unitlength}{1cm}
\begin{picture}(4,4,4)\thicklines
\put(.7,3.5){exterior region 0}
\put(0,0){\line(1,0){4}}      
\put(0,3){\line(1,0){4}}
\put(0,0){\line(0,1){1.18}}   
\put(0,3){\line(0,-1){1.18}}
\put(4,0){\line(0,1){.81}}     
\put(4,3){\line(0,-1){.81}}
\put(2,1.5){\line(4,3){2}} 
\put(4,0){\line(-4,3){2}}
\put(3.8,0.8){\line(0,1){.6}}  
\put(1.3,1.5){\line(1,0){.4}} 
\put(1.7,1.5){\line(1,0){.3}}
\put(2.0,1.5){\line(1,0){1.1}}  

\put(.6,1.5){\circle{1.4}}  
\put(.6,1.5){\circle*{.3}}
\put(.6,1.35){\line(0,-1){.55}} 
\put(3.8,1.5){\circle{1.4}}   
\put(3.8,1.5){\circle*{.3}}
\put(3.8,1.35){\line(0,-1){.55}}
\put(.5,1.65){\line(-1,1){.36}}
\put(3.7,1.65){\line(-1,1){.36}}
\put(0.7,1.65){\line(1,1){.36}}
\put(3.9,1.65){\line(1,1){.36}}
\put(.2,2.5){1}    
\put(.2,0.3){2}
\put(2.8,1.7){3}
\put(2.8,1.0){4}
\put(.1,1.3){5}
\put(3.3,1.3){5}
\put(0.9,1.3){6}
\put(4.0,1.1){6}
\put(.5,1.7){7}
\put(3.7,1.8){7}
\end{picture}\par
\caption{A good covering of the Klein bottle, 8 regions}
\label{fig:Klein}
\end{figure}

\smallskip
\paragraph{\it Klein bottle}
A good covering of the Klein bottle by 8 regions is illustrated in 
Figure \ref{fig:Klein}.  To get it, we start with a cover of the
2-sphere by the five regions 0--4, add three regions in a circle
along the intersection of regions 0, 1 and 2 (labelled 5--7), and another 
three regions in a circle at the intersection of regions 0, 3 and 4 
(also labelled 5--7). Then cut out the two dark circles and 
identify their boundaries
as indicated. Note that region 7 does not meet regions 2 and 4,
and regions 3 and 6 do not meet either.

We will see in Corollary \ref{ct:Klein} that the covering type of
the Klein bottle is either 7 or 8.


\begin{theorem}\label{Nq>7}
If $q\ge2$, then $ct(N_q)\ge7$.
\end{theorem}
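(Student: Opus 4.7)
The plan is to adapt the proof of Theorem \ref{ct:torus} essentially verbatim, but with $\Z/2$ coefficients throughout. Since $N_q$ is a closed (possibly non-orientable) surface, Poincar\'e duality with $\Z/2$ coefficients gives a non-degenerate cup product pairing $H^1(N_q;\Z/2)\otimes H^1(N_q;\Z/2)\to H^2(N_q;\Z/2)\cong\Z/2$; in particular the cup product on $H^1$ is non-trivial. So Proposition \ref{ct>5} already yields $ct(N_q)\ge 6$, and it remains only to exclude $ct(N_q)=6$.

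Suppose, for contradiction, that $\{X_i\}_{i=1}^6$ is a good cover of $N_q$. Since $H^2(N_q;\Z/2)\ne 0$, the \v{C}ech-complex observation from the proof of Proposition \ref{ct>5} forces some triple intersection $X_i\cap X_j\cap X_k$ to be nonempty; after relabelling, I may assume this triple is $X_4\cap X_5\cap X_6$. Set $B=X_4\cup X_5\cup X_6$ and $A=X_1\cup X_2\cup X_3$. Because the three pieces of $B$ form a good cover with nonempty common intersection, Example \ref{rem:ct=3} forces $H^1(B;\Z/2)=0$, while Theorem \ref{Poincare} applied to the good cover $\{X_1,X_2,X_3\}$ of $A$ gives $\dim H^1(A;\Z/2)\le\binom{2}{2}=1$.

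The hypothesis $q\ge 2$ now enters. Since $\dim H^1(N_q;\Z/2)=q>\dim H^1(A;\Z/2)$, I can pick a nonzero $x\in H^1(N_q;\Z/2)$ in the kernel of the restriction to $A$, and non-degeneracy of the cup product then supplies some $y\in H^1(N_q;\Z/2)$ with $x\cup y\ne 0$. Lifting $x$ to a class in $H^1(N_q,A)$ and (using $H^1(B;\Z/2)=0$) lifting $y$ to a class in $H^1(N_q,B)$, the relative cup product argument recalled in the proof of Lemma \ref{lem:cup} factors $x\cup y\in H^2(N_q;\Z/2)$ through $H^2(N_q,A\cup B)=H^2(N_q,N_q)=0$, contradicting $x\cup y\ne 0$. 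The main thing to be careful about is the appeal to Poincar\'e duality in the non-orientable setting, which is what guarantees non-degeneracy of the mod~2 cup product; once that is granted, the argument runs in exact parallel with the torus case, and no genuinely new input is required.
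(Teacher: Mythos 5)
Your argument is correct and follows the paper's proof essentially verbatim: the same reduction via Proposition \ref{ct>5}, the same $3+3$ split into $A$ and $B$ using a nonempty triple intersection and Example \ref{rem:ct=3}, and the same relative cup-product contradiction from Lemma \ref{lem:cup}. The only (harmless) difference is that you justify the existence of $y$ with $x\cup y\ne0$ by non-degeneracy of the mod~$2$ cup product pairing via Poincar\'e duality, where the paper simply cites the known ring structure of $H^*(N_q;\Z/2)$ ``by inspection.''
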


\begin{proof}
When $q\ge2$, it is well known that the cup product is nontrivial on
$H^1(N_q,\Z/2)$.  For example, $H^1(N_2,\Z/2)$ is 2-dimensional,
and has a basis $\{ x,y\}$ such that $y^2=0$, $x^2=x\cup y\ne0$. 
By Proposition \ref{ct>5}, the covering type is at least 6.

Suppose that $X=N_q$ had a good cover with 6 regions $X_i$.
As in the proof of Proposition \ref{ct>5}, the fact that
$H^2(X)\ne0$ implies that there exist $i,j,k$ 
such that $X_{i}\cap X_{j}\cap X_{k}\ne\varnothing.$
Set $B=X_{i}\cup X_{j}\cup X_{k}$, and note that
$H^1(B)=0$ by Example \ref{rem:ct=3}. Let $A$ be the union
of the other three subcomplexes in the cover.
As in the proof of Theorem \ref{ct:torus}, $\dim H^1(A)\le1$
so there is an element $u$ in the kernel of $H^1(X)\to H^1(A)$.
By inspection, there is an element $v\ne u$ in $H^1(X)$ so
that $u\cup v\ne0$. Lifting $u$ to $u'\in H^1(X,A)$ and $v$ to
$v'\in H^1(X,B)$, the proof of Lemma \ref{lem:cup} shows that
$u\cup v=0$, which is a contradiction.
\end{proof}

\begin{corollary}\label{ct:Klein}
The Klein bottle $N_2$ has covering type 7 or 8.
\end{corollary}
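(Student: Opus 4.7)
The proof plan is simply to combine the two bounds already established in this section. For the upper bound $ct(N_2) \le 8$, I would cite the explicit good cover of the Klein bottle displayed in Figure \ref{fig:Klein}: one checks from the picture (using the remarks immediately preceding the corollary about which regions fail to meet) that each of the 8 labelled regions is contractible and that every nonempty intersection is also contractible, so the strict covering type of this particular CW structure is at most 8.

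For the lower bound $ct(N_2) \ge 7$, I would simply invoke Theorem \ref{Nq>7} with $q = 2$, which directly asserts that $ct(N_q) \ge 7$ for every $q \ge 2$, and in particular for the Klein bottle.

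Combining these two inequalities yields $7 \le ct(N_2) \le 8$, which is exactly the statement of the corollary. There is no real technical obstacle; the corollary is essentially a bookkeeping statement recording the consequence of results already proved. The genuine difficulty — acknowledged in the introduction as part of the open problem — is that neither inequality can currently be sharpened: one would need either to exhibit a good cover of the Klein bottle by only 7 contractible subcomplexes with contractible intersections, or to rule out the existence of such a cover by an argument strictly stronger than the cup-product obstruction used in Theorem \ref{Nq>7}. Proving Corollary \ref{ct:Klein} itself, however, requires no new input beyond what has already been developed.
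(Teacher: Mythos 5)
Your proposal is correct and matches the paper's own proof, which likewise combines the lower bound from Theorem \ref{Nq>7} (with $q=2$) with the upper bound $ct(N_2)\le8$ furnished by the 8-region good cover in Figure \ref{fig:Klein}. No gaps; this is exactly the intended bookkeeping argument.
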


\begin{proof}
Combine Theorem \ref{Nq>7} with the upper bound  $ct(N_2)\!\le\!8$
coming from Figure \ref{fig:Klein},
which shows that.
\end{proof}

\section{Bounds via genus}\label{sec:cohomology}

We may regard the covering type and the chromatic number of a surface $S$
as functions of the genus of $S$. We will now show that the functions
are asymptotic; for both orientable and non-orientable surfaces,
their ratio lies between~1 and~$\sqrt{1/3}$ 
as the genus goes to $\infty$.

\begin{defn}\label{def:chromatic}
The {\it chromatic number} $\chr(S)$ of a surface $S$ is defined to be the
smallest number $n$ such that every map on $S$ is colorable with
$n$ colors. By a {\it map} on $S$ we mean a decomposition of $S$ into
closed polyhedral regions, called {\it countries}, such that 
the boundaries of the regions form a finite graph.
\end{defn}

\subsection*{\bf Orientable surfaces:}
When $S_g$ is an oriented surface of genus $g$,
it is a famous theorem that
\[
\chr(S_g) = \left\lfloor \frac{7+\sqrt{1+48g}}2\; \right\rfloor
\]
where $\lfloor x\rfloor$ denotes the greatest integer at most $x$.
The case $g=0$ is the Four-color Theorem; 
see \cite{Ringel} or \cite[Chap. 5]{Gross.Tucker} when $g>0$.

This integer is sometimes called the 
\textit{Heawood number} of $S_{g}$, after Percy Heawood who 
first studied $\chr(S_{g})$ in the 1890 paper \cite{Hae}.
We now show that $ct(S_g)\le1+\chr(S_g)$.

\begin{proposition}\label{ct-surface} 
The covering type $ct(S_g)$ of an oriented surface $S_g$ of
genus $g\ne2$ satisfies 
\begin{equation*}
\left\lceil\frac{3+\sqrt{1+16g}}{2}\right\rceil 
\le ct(S_g) \le 
\left\lceil \frac{7+\root{}\of{1+48g}}2\;\right\rceil.
\end{equation*}
\noindent If $g=2$, we have $6\le ct(S_2)\le10$.
\end{proposition}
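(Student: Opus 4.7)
The plan is to prove the lower and upper bounds separately: the lower bound comes from the homology of $S_g$, and the upper bound from the minimum number of vertices in a triangulation of $S_g$.

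For the lower bound I would apply Theorem \ref{Poincare} directly. The surface $S_g$ has $h_0=h_2=1$ and $h_1=2g$, so setting $n=ct(S_g)$ the theorem gives $\binom{n-1}{2}\ge 2g$, equivalently $n^2-3n-(4g-2)\ge 0$. Solving this quadratic yields $n\ge(3+\sqrt{1+16g})/2$, which, since $n$ is an integer, is the displayed ceiling. For $g=2$ this argument only gives $n\ge 5$, so I would additionally invoke Proposition \ref{ct>5}: the intersection form on $H^1(S_2)$ is non-degenerate, so the cup product is non-zero, and hence $ct(S_2)\ge 6$.

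For the upper bound, the key observation is that if $S_g$ admits a simplicial triangulation with $V$ vertices, then the open stars of the vertices form a good open cover of size $V$, so $ct(S_g)\le V$; it therefore suffices to find a triangulation whose vertex count matches the claimed bound. The relations $V-E+F=2-2g$, $2E=3F$, together with the simplicial inequality $E\le\binom{V}{2}$, yield the combinatorial lower bound $V\ge(7+\sqrt{1+48g})/2$, and the main step is that this is sharp whenever $g\ne 2$. I would quote the minimum-triangulation theorem of Jungerman and Ringel (the same source as Figure \ref{fig:g=2}): for every $g\ne 2$ there is a triangulation of $S_g$ with exactly $\lceil(7+\sqrt{1+48g})/2\rceil$ vertices, whose open stars furnish the desired good cover.

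The boundary case $g=2$ is where Jungerman--Ringel fails: no $9$-vertex triangulation of $S_2$ exists, and the sharp minimum is $10$. Here I would simply cite the explicit $10$-region good cover of $S_2$ drawn in Figure \ref{fig:g=2} to conclude $ct(S_2)\le 10$, and combine this with the lower bound $ct(S_2)\ge 6$ derived above. The main obstacle in the proof is thus the appeal to the Jungerman--Ringel theorem, whose proof lies outside the scope of this paper; everything else reduces to Theorem \ref{Poincare}, Proposition \ref{ct>5}, or the elementary fact that open stars of vertices in a simplicial complex form a good cover.
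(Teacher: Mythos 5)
Your proposal is correct and follows essentially the same route as the paper: the lower bound from Theorem \ref{Poincare} applied with $h_1=2g$ (plus Proposition \ref{ct>5} for the case $g=2$), and the upper bound from the Jungerman--Ringel minimal triangulations, whose vertex stars give a good open cover, with Figure \ref{fig:g=2} supplying the $10$-region cover for $g=2$. The only cosmetic difference is that the paper disposes of $g=0,1$ by quoting the already-computed values $ct=4,7$ rather than folding them into the Jungerman--Ringel citation, which changes nothing of substance.
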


\begin{proof}
Set $n=ct(X)$.
 For $g=0,1$ we have seen that $n=4,7$, respectively.
For $g=2$, we saw that $n\le10$ (Figure \ref{fig:g=2}),
and we saw in Proposition \ref{ct>5} that $n\ge6$.
Thus we may assume that $g>2$.

The lower bound $n\ge(3+\sqrt{1+16g})/2$ is just the solution 
of the quadratic inequality for $h_{1}=2g$ in Proposition \ref{Poincare}:
\begin{equation*}
4g\le 2{\textstyle\binom{n-1}2} = n^{2}-3n+2.
\end{equation*}

When $g>2$, Jungermann and Ringel showed in \cite[Thm.\,1.2]{JR}
that there is a triangulation of $S_g$ with 
$n=\lceil\frac{7+\sqrt{1+48g}}2\;\rceil$ vertices,
$\delta=2n+4(g-1)$ triangles and
$\binom{n}2-t$ edges, where $t=\binom{n-3}2-6g$.
The open stars of the $n$ vertices form a good open covering of $S_g$.
\end{proof}

\begin{example} ($g=3$) 
Proposition \ref{ct-surface} yields $6\le ct(S_3)\le10$.
A 10-vertex triangulation of $S_3$ is implicitly given by the 
orientation data on \cite[p.\,23]{Ringel}; the triangulation has
42 edges and 28 triangles.
\end{example}

\begin{remm}
Any triangulation of $S_g$ determines a graph $\Gamma$ on $S_g$.
The dual graph of $\Gamma$ is formed by taking a vertex in the
center of each of the $\delta$ triangles, and 
connecting vertices of adjacent triangles along an arc 
through the edge where the triangles meet. Each country $X_v$ in the 
dual map is a polygonal region, containing exactly one vertex $v$ from 
the original triangulation, and the number of sides in 
the polygon $X_v$ is the valence of $v$ in the $\Gamma$.

The $n$ countries in the dual map form a good closed covering of $S_g$
because the intersection of $X_v$ and $X_w$ is either a face (when
$v$ and $w$ are connected) or the empty set, and three polygons
meet in a vertex exactly when the corresponding vertices
form a triangle in the original triangulation.
\end{remm}

\subsection*{\bf Non-orientable surfaces:}
Similar results hold for non-orientable surfaces.
For example, consider the two non-orientable surfaces of genus $q\le2$. 
The projective plane has chromatic number 6; see Figure \ref{fig:Petersen}.
It is also well known that the Klein bottle has chromatic number 6;
this fact was discovered by Frankin in 1934 \cite{Franklin}.
However, the 6-country map on the Klein bottle is not a good covering,
because some regions intersect in two disjoint edges;
see \cite[Fig.\,1.9]{Ringel}. Theorem \ref{Nq>7} shows that 
there are no good covers of the Klein bottle by 6 regions.
In this case, $ct(X)$ is strictly bigger that the chromatic number of $X$.

For $q\ne1,2$, a famous theorem (see \cite[Thm.\,4.10]{Ringel}) states that 
the chromatic number of a non-orientable surface of genus $q$ is 
\[
\chr(N_q)=\lfloor \frac{7+\sqrt{1+24q}}2\rfloor.
\]
We now show that the covering type of a non-orientable surface $N_q$
grows (as a function of $q$) 
at the same rate as the chromatic number of $N_q$.
In particular, $ct(N_q)\le1+\chr(N_q)$.

\begin{proposition}\label{ct:non-oriented} 
The covering type $ct(N_q)$ of a non-oriented surface $N_q$ of
genus $q\ge4$ satisfies 
\begin{equation*}
5\le \left\lceil\frac{3+\sqrt{1+8q}}{2}\;\right\rceil 
\le ct(N_q) \le 
\left\lceil \frac{7+\sqrt{1+24q}}2\;\right\rceil.
\end{equation*}
\end{proposition}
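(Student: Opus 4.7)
The plan is to mirror the proof of Proposition \ref{ct-surface}: derive the lower bound from Theorem \ref{Poincare} applied with $\mathbb{F}_2$ coefficients, and derive the upper bound from a minimal triangulation of $N_q$ supplied by Ringel's solution of the Heawood map-coloring problem.

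For the lower bound, let $n = ct(N_q)$. The mod-$2$ Betti numbers of $N_q$ are $h_0 = 1$, $h_1 = q$ and $h_2 = 1$, so Theorem \ref{Poincare} applied with $k = \mathbb{F}_2$ forces $q \le \binom{n-1}{2}$. The quadratic inequality $2q \le (n-1)(n-2)$ rearranges to $n \ge (3+\sqrt{1+8q})/2$, and integrality of $n$ then gives $n \ge \lceil (3+\sqrt{1+8q})/2\,\rceil$. A direct check at $q=4$ shows this ceiling equals $5$, and monotonicity in $q$ yields the bound $\ge 5$ for all $q\ge 4$.

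For the upper bound, I appeal to the non-orientable analogue of the Jungermann--Ringel count used in Proposition \ref{ct-surface}: for $q\ge 4$, Ringel's work on the Heawood problem for $N_q$ produces a triangulation of $N_q$ with exactly $n = \lceil (7+\sqrt{1+24q})/2\,\rceil$ vertices. Typically this triangulation comes from an embedding of the complete graph $K_n$ (or a near-complete subgraph, when $q$ does not attain the sharp Ringel bound $q = \lceil (n-3)(n-4)/6\,\rceil$) completed to a triangulation. Exactly as in Proposition \ref{ct-surface}, the open stars of these $n$ vertices form a good open cover of $N_q$, so $ct(N_q)\le n$.

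The main obstacle I foresee is pinning down the exact vertex count in such a triangulation. In the orientable case the paper cites \cite[Thm.\,1.2]{JR}, which bundles the vertex, edge, and triangle counts together; here one needs a parallel combinatorial existence statement for $N_q$, which should be extractable from the non-orientable chapters of \cite{Ringel}. If no packaged version is directly available, the fallback is to begin with Ringel's tight embedding $K_n\hookrightarrow N_{\lceil (n-3)(n-4)/6\,\rceil}$ and, for intermediate values of $q$, insert vertices one at a time by subdividing a triangle---each such subdivision preserves the triangulation property and decreases the Euler characteristic by the right amount to land on the desired $N_q$ with the required vertex count, leaving the good star-cover argument intact.
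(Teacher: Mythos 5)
Your argument is essentially the paper's: the lower bound is obtained exactly as in the paper from Theorem \ref{Poincare} via $q\le\binom{n-1}{2}$, and the upper bound from the existence, for every $q\ne2,3$, of a triangulation of $N_q$ with $\left\lceil\frac{7+\sqrt{1+24q}}{2}\right\rceil$ vertices (equivalently, a good polyhedral cover by that many countries), whose vertex stars give a good open cover; this existence statement is precisely Ringel's 1955 theorem \cite{Ringel1955}, which is the reference the paper cites, so the ``packaged version'' you worry about is directly available. One caveat: your proposed fallback would not work as written, since subdividing a triangle leaves the Euler characteristic (hence the surface) unchanged, so it cannot interpolate between genera---but since \cite{Ringel1955} covers all $q\ge4$, no such interpolation is needed.
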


\begin{proof}
The lower bound is immediate from the solution of the quadratic
inequality $\binom{n-1}2\ge q$ 
of Theorem \ref{Poincare}. 

The upper bounds were investigated by Ringel in 1955 \cite{Ringel1955}; 
he considered polyhedral covers of $N_q$ by $\lambda$
countries, any two meeting in at most an arc and any three
meeting in at most a point, and showed that
$\lambda \ge\lceil\frac{7+\sqrt{1+24q}}2\rceil$,
with the lower bound being achieved for all $q\ne2,3$.
\end{proof}

\begin{rem}
The upper bounds for $q<4$ found by Ringel in \cite[p.\,320]{Ringel1955} were:
$ct(N_2)\le8$ if $q=2$ (the Klein bottle), and $ct(N_3)\le9$ if $q=3$.
We saw in Corollary \ref{ct:Klein} that $ct(N_2)$ is 7 or 8; 
Ringel's upper bound for the Klein bottle 
corresponds to Figure \ref{fig:Klein}.

Combining Theorem \ref{Nq>7} with Proposition \ref{ct:non-oriented} and
Ringel's bound for $q=3$, we see that
$ct(N_3)$ and $ct(N_4)$ are either 7, 8 or 9 and that
$7\le ct(N_5)\le10$.
\end{rem}


\section{Higher dimensions}\label{sec:higher}

We do not know much about the covering type of higher-dimensional
spaces. In this section, we give a few general theorems for upper bounds
on the covering type.

\subsection*{\bf Suspensions:}
The covering type of the suspension of a finite CW complex $X$ is 
at most one more than the covering type of $X$.

To see this, recall that the cone $CX$ is the quotient of 
$X\times[0,1]$ by the relation $(x,1)\sim(x',1)$ for all $x,x'\in X$, 
while the suspension $SX$ is the quotient of $X\times[-1,1]$ by the 
two relations $(x,1)\sim(x',1)$ and $(x,-1)\sim(x',-1)$.
If we start with a good closed cover of $X$ by subcomplexes $X_i$, 
the cones $CX_i$ of these form a good cover of the cone $CX$. 
Viewing the suspension $SX$ as the union of an upper cone and a 
lower cone, the lower cones $CX_i$ together
with the upper cone form a good (closed) cover of $SX$.
(This argument works for a good open cover of any topological space,
provided we use open cones of the form
$X\times(-\varepsilon,1]/(x,1)\sim(x',1)$;
we leave the details as an easy exercise.)

The operation of coning off a subspace generalizes the suspension.
To give a bound for the covering type in this case, 
we need a compatible pair of good covers.

\begin{theorem}\label{thm:cone}
Let $X$ be a subcomplex of a finite CW complex $Y$\!. 
Suppose that $Y$ has a good closed cover $Y_1,...,Y_n$ such that 
$\{X\cap Y_i\}$ is a good closed cover of $X$.
Then the covering type of the cone $Y\cup_XCX$ of the inclusion
$X\subset Y$ satisfies:
\begin{equation*}
ct(Y\cup_XCX)\le n+1.
\end{equation*}
In particular, the suspension $SX$ has $ct(SX)\le ct(X)+1$.
\end{theorem}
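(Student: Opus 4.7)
The plan is to exhibit a good closed cover of $Y\cup_X CX$ with $n+1$ elements, then invoke Theorem \ref{open=closed} to conclude $ct(Y\cup_X CX)\le n+1$. The natural candidate is
\[
\{Y_1,\ldots,Y_n,\;CX\},
\]
where $CX$ is viewed as a closed subcomplex of the pushout $Y\cup_X CX$.

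First I would check that each member is contractible: the $Y_i$ by hypothesis, and $CX$ because every cone is contractible. Then I would verify contractibility of nonempty intersections in two cases. For an intersection entirely among the $Y_i$, contractibility is immediate from the hypothesis that $\{Y_i\}$ is a good closed cover of $Y$. For intersections involving $CX$, the key observation is that in the pushout $Y\cup_X CX$ the cone $CX$ meets $Y$ precisely in (the image of) $X$; hence
\[
CX \cap Y_{i_1}\cap\cdots\cap Y_{i_k} \;=\; (X\cap Y_{i_1})\cap\cdots\cap(X\cap Y_{i_k}),
\]
which is contractible (or empty) by the hypothesis that $\{X\cap Y_i\}$ is a good closed cover of $X$. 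This finishes the bound $ct(Y\cup_X CX)\le n+1$.

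For the suspension statement, I would write $SX$ as the pushout $CX\cup_X CX$, i.e.\ as $Y\cup_X CX$ with $Y=CX$. Given a good closed cover $\{X_1,\ldots,X_n\}$ of $X$ with $n=ct(X)$, take $Y_i=CX_i$; the cones form a good closed cover of $Y=CX$ because
\[
CX_{i_1}\cap\cdots\cap CX_{i_k} \;=\; C(X_{i_1}\cap\cdots\cap X_{i_k})
\]
is either a cone (hence contractible) or empty. Moreover $X\cap CX_i=X_i$, so the induced cover on $X$ is the given good cover. Applying the first part gives $ct(SX)\le n+1=ct(X)+1$.

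The proof is essentially bookkeeping, and the only substantive point is the identity $CX\cap Y=X$ inside the pushout, which is immediate from the construction but deserves to be stated explicitly. A minor technicality is to confirm that $CX$ inherits a CW structure extending the one on $X$, so that $Y\cup_X CX$ is a genuine finite CW complex and Theorem \ref{open=closed} legitimately converts our good closed cover into a bound on $ct$.
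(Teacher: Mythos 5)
Your proposal is correct and follows essentially the same route as the paper: it exhibits $\{Y_1,\dots,Y_n,CX\}$ as a good closed cover of $Y\cup_X CX$ (using $CX\cap Y=X$), and derives the suspension bound by taking $Y_i=CX_i$ for a minimal good cover $\{X_i\}$ of $X$. You merely spell out the verification the paper leaves implicit, including the harmless nuance that intersections of the cones $CX_i$ always contain the cone point.
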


\begin{proof}
The cone $CX$ together with $\{Y_i\}_{i=1}^n$ 
forms a good closed cover of $Y\cup_X CX$, whence the first assertion. 
Since the suspension $SX$ is the cone of the inclusion of $X$
into the upper cone $Y$ of the suspension, the second assertion
follows from the observation that, given a good cover $\{X_i\}$
of $X$, the cones $Y_i$ of the $X_i$ satisfy the hypothesis of
the theorem.  
\end{proof}

\subsection*{\bf Covering spaces:}
Another simple comparison involves the covering type of a covering space. 
If $X$ is an $n$-sheeted covering space of $Y$, 
the covering type of $X$ is at most $n\cdot ct(Y)$, 
because the inverse image of a contractible
subcomplex of $Y$ is the disjoint union of $n$ contractible
subcomplexes of $X$.

Recall that the mapping cylinder, $\cyl(f)$, of a function $f:X\to Y$
is the quotient of $X\times[0,1]\cup Y$ by the equivalence relation
$(x,0)\sim f(x)$; $\cyl(f)\to Y$ is a homotopy equivalence.
We may define the cone of $f$, $\cone f$, to be
the cone of the inclusion of $A=X\times\{1\}$ into $\cyl(f)$.

\begin{theorem}\label{thm:covering}
If $f:X\to Y$ is an $n$-sheeted covering space, the
cone $\cone f$ has covering type at most $n\cdot ct(Y)+1$,
\end{theorem}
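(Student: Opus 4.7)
The plan is to apply Theorem \ref{thm:cone} to the inclusion $A = X \times \{1\} \hookrightarrow \cyl(f)$, since $\cone f = \cyl(f) \cup_A CA$. It will suffice to build a good closed cover of $\cyl(f)$ with $n \cdot ct(Y)$ elements whose restriction to $A \cong X$ is a good closed cover of $A$; Theorem \ref{thm:cone} will then yield the claimed bound.

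By Theorem \ref{open=closed}, we may start from a good closed cover $\{Y_j\}_{j=1}^m$ of $Y$ with $m = ct(Y)$. Since each $Y_j$ is contractible, the restricted covering $f^{-1}(Y_j) \to Y_j$ is trivial, so $f^{-1}(Y_j) = \bigsqcup_{k=1}^n X_{j,k}$ with each $X_{j,k}$ mapped homeomorphically onto $Y_j$ by $f$. We would first check that $\{X_{j,k}\}$ is itself a good cover of $X$: if two indices share the same $j$ but not $k$, the pieces are disjoint, so it suffices to consider intersections $\bigcap_\alpha X_{j_\alpha, k_\alpha}$ with distinct $j_\alpha$. Such an intersection lies in $f^{-1}(I)$ where $I = \bigcap_\alpha Y_{j_\alpha}$ is empty or contractible; in the contractible case the covering over $I$ is again trivial, $f^{-1}(I) = \bigsqcup_l W_l$ with each $W_l$ mapping homeomorphically onto $I$. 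A short argument using injectivity of $f|_{X_{j_\alpha, k_\alpha}}$ shows that distinct $W_l$'s lie in distinct tuples of $X_{j_\alpha, k_\alpha}$'s, so $\bigcap_\alpha X_{j_\alpha, k_\alpha}$ is empty or a single contractible component.

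Next, we would put $Z_{j,k} = (X_{j,k} \times [0,1]) \cup Y_j \subset \cyl(f)$, i.e., the mapping cylinder of the homeomorphism $f|_{X_{j,k}}$, which is contractible. The intersection $\bigcap_\alpha Z_{j_\alpha, k_\alpha}$ has base part $I = \bigcap_\alpha Y_{j_\alpha}$ and cylinder part $J \times (0,1]$ for $J = \bigcap_\alpha X_{j_\alpha, k_\alpha}$. When $I = \emptyset$ it is empty; when $I \ne \emptyset$ and $J = \emptyset$ it equals $I$; when both are nonempty, $J$ is a single component mapping homeomorphically to $I$ and the intersection is the mapping cylinder of this homeomorphism. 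All nonempty cases are contractible, so $\{Z_{j,k}\}$ is a good closed cover of $\cyl(f)$ with $nm$ elements. Since $Z_{j,k} \cap A = X_{j,k} \times \{1\}$ restricts to the good cover $\{X_{j,k}\}$ of $A$, Theorem \ref{thm:cone} yields $ct(\cone f) \le nm + 1 = n \cdot ct(Y) + 1$.

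The main obstacle will be the combinatorial verification that the lifted cover $\{X_{j,k}\}$ of $X$ is still good, specifically that multi-fold intersections collapse to single components rather than disjoint unions. This will rest on the observation that any connected component of $f^{-1}(I)$ over a contractible intersection $I$ lies in a uniquely determined component of each $f^{-1}(Y_{j_\alpha})$, which in turn follows from the triviality of the covering over each $Y_{j_\alpha}$ and over $I$.
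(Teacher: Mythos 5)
Your proposal is correct and follows essentially the same route as the paper: lift a minimal good closed cover $\{Y_j\}$ of $Y$ to the components $X_{j,k}$ of $f^{-1}(Y_j)$, cover $\cyl(f)$ by the mapping cylinders of the homeomorphisms $f|_{X_{j,k}}$, and apply Theorem \ref{thm:cone} to the inclusion of $A=X\times\{1\}$. Your extra verification that intersections of the lifted pieces are single contractible components is a detail the paper leaves implicit, and it is carried out correctly.
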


\begin{proof}
If $\{Y_k\}$ is a good cover of $Y$, then the inverse image of each $Y_k$
is a disjoint union of $n$ contractible subspaces we shall call $X_{ik}$,
and each $f_{ik}:X_{ik}\to Y_k$ is a homeomorphism.
Then the mapping cylinders $\cyl(f_{ik})$ form a good cover of $\cyl(f)$.
Since the $X_{ik}\times\{1\}=A\cap\cyl(f_{ik})$ form a good cover of 
$A=X\times\{1\}$, the conditions of Theorem \ref{thm:cone} are met.
Since $\cone f$ is homotopy equivalent to $CA\cup_A\cyl(f)$, the result follows.
\end{proof}

\begin{example}
The bound in Theorem \ref{thm:covering} is rarely sharp, although
the cone of the projection from $S^{0}$ to a point is $S^{1}$ and indeed
$ct(S^{1})=3$. On the other hand, if $X$ is $n$ discrete points and $f$
is the projection from $X$ to a point, the cone of $f$ is a bouquet of $n-1$ 
circles, whose covering type is given by Theorem \ref{ct:bouquets};
$ct(\cone f)$ is smaller than $n+1$ when $n\ge4$.

Another example is given by $\RP^m$, which is the cone of the 
degree~2 function $f:S^{m-1}\to S^{m-1}$; Theorem \ref{thm:covering} yields
the upper bound $ct(\RP^m)\le2m+3$. This is not sharp for $m=2$
since we know that $ct(\RP^2)=6$, but (with Proposition \ref{bound:m+2})
it does give the linear growth rate
\[
m+2\le ct(\RP^m)\le2m+3.
\]
\end{example}

\begin{rem}
We do not know a good upper bound for the covering type of a product,
beyond the obvious bound $ct(X\times Y)\le ct(X)\,ct(Y)$.
The torus $T=S^1\times S^1$ shows that this bound is not sharp,
since $ct(T)=7$ and $ct(S^1)=3$.
\end{rem}

\goodbreak
\smallskip
\paragraph{\textit{Acknowledgements}}
The authors would like to thank Mike Saks and Patrick Devlin 
for discussions about color mapping theorems, Ieke Moerdijk
for discussions about Quillen's Theorem~A, and the referee for
numerous expositional suggestions.

\end{document}